\newtheorem{thm}{Theorem}
\newtheorem{lem}[thm]{Lemma}
\newtheorem{prop}[thm]{Proposition}
\def\Char{{\mathrm{Char}}}
\def\int{{\text{int}}}
\def\mod{{\textup{mod} \;}}
\begin{document}

\title[Changemaker lattices and Alexander polynomials of lens space knots]%
{A note on changemaker lattices and Alexander polynomials of lens space knots}

\author[Jacob Caudell]{Jacob Caudell}
\begin{abstract}
    We give an alternative proof of a recent theorem of Tange using the technology of changemaker lattices. Specifically, for $K\subset S^3$ a non-trivial knot with a lens space surgery, we give constraints on the Alexander polynomial of $K$ and the lens space surgery when the coefficient of $T^{g(K)-2}$ in $\Delta_K(T)$ is non-zero.
\end{abstract}

\address{Department of Mathematics, Boston College\\ Chestnut Hill, MA 02467}

\email{caudell@bc.edu}

\maketitle

\section{Introduction}
Let $K\subset S^3$ be a knot and $p/q\in \mathbb Q\cup\{1/0\}$ a \textit{slope} on the torus boundary of $S^3-\nu(K)$. Denote by $K(p/q)$ the result of $p/q$-framed Dehn surgery on $K$. A knot is said to be a \textit{lens space knot} if it admits a surgery to a lens space. Denote by $T_{r,s}$ the $(r,s)$-\textit{torus knot}.

It is well-known that the symmetrized Alexander polynomial of a lens space knot takes the form
\[
    \Delta_K(T) = (-1)^r +\sum_{j=1}^{r}(-1)^{j-1}(T^{n_j}+T^{-n_j}),
\]
for some sequence of integers $g=n_1>n_2>\ldots >n_r>0$ \cite[Corollary 1.3]{ozsvath2005knot}. Furthermore, it has been known for some time that $n_2=g-1$ \cite{hedden2018geography, tange2020alexander}. Much more recently, Tange proved the following:

\begin{thm}[\cite{tange2020term}]
Let $K\subset S^3$ be a non-trivial knot with $K(p)$ a lens space. If $n_3=g-2$, then $\Delta_K(T)=\Delta_{T_{2,2g+1}}(T)$, and $K(p)\cong T_{2,2g+1}(p)$.
\end{thm}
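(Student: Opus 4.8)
\section*{Proof proposal}

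The plan is to pass from the lens space surgery to its associated changemaker lattice, read the hypothesis $n_3 = g-2$ off of the changemaker vector, and show that this single constraint collapses the vector to the one coming from $T_{2,2g+1}$. I begin by recasting the hypothesis numerically. Write $a_k$ for the coefficient of $T^k$ in $\Delta_K(T)$, and recall the torsion coefficients $t_i = \sum_{j\ge 1} j\,a_{i+j}$. Telescoping gives $t_{i-1}-t_i = \sum_{k\ge i} a_k$, and directly from the form of $\Delta_K(T)$ in the excerpt this tail sum equals $\sum_{j:\,n_j\ge i}(-1)^{j-1}\in\{0,1\}$. Since $n_1 = g$ and $n_2 = g-1$, one computes $t_g = 0$, $t_{g-1}=t_{g-2}=1$, and $t_{g-3} = 1 + \sum_{k\ge g-2}a_k$; thus the hypothesis $n_3 = g-2$ is equivalent to the clean statement $t_{g-3} = 2$.

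Next I would invoke the changemaker lattice theorem of Greene for lens space surgeries: because $K(p)$ is a lens space, the surgery is encoded by a changemaker vector $\sigma = (\sigma_0,\dots,\sigma_n)$ with nondecreasing positive entries, $\sum_i \sigma_i^2 = p$, and $\sigma^\perp \subset \bZ^{n+1}$ isometric to the linear lattice of $L(p,q)$. Two features of this dictionary are central: first, $\sigma$ determines the entire sequence $\{t_i\}$ (hence $\Delta_K$); second, the genus is recovered as $2g = \sum_i \sigma_i(\sigma_i-1)$. The heart of the argument, and the step I expect to be the main obstacle, is a local-to-global lemma asserting that $t_{g-3}=2$ forces $\sigma_n \le 2$. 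The mechanism is that the largest entry $\sigma_n$ controls the top of the coefficient sequence of $\Delta_K$: I expect to prove that the second gap satisfies $n_2 - n_3 = \sigma_n - 1$, so that an entry $\sigma_n \ge 3$ produces a wide top step, forcing $a_{g-2}=0$ and hence $t_{g-3}=1$, contrary to hypothesis. Establishing this precise relationship—by unwinding the formula for $\{t_i\}$ in terms of $\sigma$—is exactly what converts information about a single coefficient of $\Delta_K$ into a bound on the whole vector: since the entries are nondecreasing, controlling $\sigma_n$ controls them all.

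Granting $\sigma \in \{1,2\}^{n+1}$, the identity $2g = \sum_i \sigma_i(\sigma_i-1)$ shows that $\sigma$ has exactly $g$ entries equal to $2$, so $\sigma = (1^a,2^g)$ for some $a\ge 1$. A direct evaluation of $\{t_i\}$ for such a vector recovers $\Delta_K(T) = \sum_{i=0}^{2g}(-1)^i T^{g-i} = \Delta_{T_{2,2g+1}}(T)$, which is the first conclusion. For the second, I would use that $\sigma^\perp$ is the linear lattice of a lens space: this constrains $a$ to lie in $\{1,3\}$, i.e. $p \in \{4g+1,\,4g+3\}$, which are precisely the lens space slopes of $T_{2,2g+1}$, and pins down the pair $(p,q)$ so that $K(p)\cong T_{2,2g+1}(p)$. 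Finally, the degenerate small-genus cases, where $g-2 \le 0$ and the exponent $n_3$ need not be meaningful, I would dispatch by hand.
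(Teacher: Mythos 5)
Your proposal follows the paper's own proof essentially step for step: the translation $n_3=g-2 \Leftrightarrow t_{g-3}=2$, the key step that this forces $\sigma_n\le 2$ (the paper's Lemma 4, proved by exactly the mechanism you describe — building a characteristic vector of the right square from the subset-sum property of changemakers and invoking the formula for $t_i$ in terms of $\sigma$), the genus identity $2g=\sum_i\sigma_i(\sigma_i-1)$ giving $\sigma=(1,\ldots,1,2,\ldots,2)$ with $g$ twos, and the constraint that linearity of $\sigma^\perp$ forces the number of $1$'s to be $1$ or $3$ (the paper's Lemma 5, via a claw/decomposability argument), capped off by the fact that a linear lattice determines $(p,q)$ up to inversion. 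Your sharper claim $n_2-n_3=\sigma_n-1$ is in fact correct for lens space knots and follows from the same computation, so the plan is sound and matches the paper's route.
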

This theorem, an affirmative answer to a question of Teragaito, improves upon a list of criteria that Tange proved are each individually equivalent to the condition that $\Delta_K(T)=\Delta_{T_{2,2g+1}}(T)$ for $K$ a lens space knot \cite[Theorem 1.11]{tange2020alexander}. In this note, we offer an alternative proof of Theorem 1 using the technology of changemaker lattices.

\subsection{Conventions.} All manifolds are smooth and oriented. Let $U$ denote the unknot in $S^3$. The lens space $L(p,q)$ is oriented as $-U(p/q)$. Homology groups are taken with coefficients in $\mathbb Z$. For $L$ a lattice, $A_L$ denotes the Gram matrix for $L$ with respect to an implied basis.

\section*{Acknowledgments} Thanks to Fraser Binns and Braeden Reinoso for a helpful conversation. Thanks to Joshua Greene, the author's advisor, for his encouragement and for helpful feedback on this note.

\section{Preliminaries}
\subsection{Input from lattice theory.} Here we recall some elementary facts about lattices pertaining to lens space surgeries. 
\subsubsection{Linear lattices.} Let $p$ and $q$ be integers, with $p>q>0$. The improper fraction $p/q$ admits a unique continued fraction expansion

    $$p/q =[x_1,x_2,\ldots,x_n]^-= x_1 - \frac{1}{x_2-\frac{1}{\ddots- \frac{1}{x_n}}}$$
with each $x_i\geq 2$ an integer. The lens space $L(p,q)$ is the oriented boundary of the negative definite 4-manifold $X(p,q)$ given by attaching 4-dimensional 2-handles to a linear chain of $n$ unknots in the boundary of $B^4$, where the framing of the $i^\text{th}$ handle attachment is $-x_i$. The \textit{linear lattice} $\Lambda(p,q)$ is the free module $H_2(X(p,q))$ equipped with its intersection form. It is presented by the linking matrix of the surgery diagram in Figure \ref{fig:linearchain}, namely

\begin{figure}
    \centering
    \includegraphics[scale=.8]{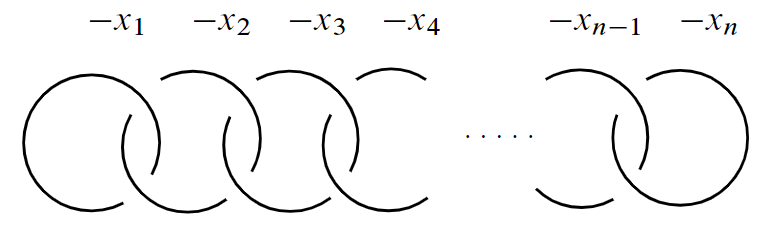}
    \caption{A Kirby diagram of $X(p,q)$.}
    \label{fig:linearchain}
\end{figure}

\[
    A_{\Lambda(p,q)} = \begin{bmatrix}
    -x_1 & 1 & 0 & \ & 0\\
    1 & -x_2 & 1 &   \ldots    & 0\\
    0 & 1 & -x_3 & \ & 0\\
    \ & \vdots & \ & \ddots & 1 \\
    0 & 0 & 0 & 1 & -x_n
    \end{bmatrix}.
\]

Denote by $x^{[k]}$ a list of $k$ $x's$, as in $[\ldots,2^{[3]},\ldots]^- = [\ldots, 2, 2, 2, \ldots]^-$.

\subsubsection{Changemaker lattices.} Let $\{e_0,e_1,\ldots, e_n\}$ be an orthonormal basis for $-\mathbb Z^{n+1}$. A vector $\sigma \in -\mathbb Z^{n+1}$ is said to be a \emph{changemaker} if $\sigma_0=0$ or $1$, and for every $1\leq i \leq n$,

\begin{equation}
    \sigma_{i-1}\leq \sigma_i\leq \sum_{j=0}^{i-1}\sigma_j +1.
\end{equation}
A negative definite rank $n$ lattice $L$ is said to be a \emph{changemaker lattice} if $L$ embeds in $-\mathbb Z^{n+1}$ as the orthogonal complement to a changemaker $\sigma$. 

There is an equivalent and coordinate-free characterization of the changemaker condition that is more suggestive of its context in Floer-theoretic considerations. Denote the \textit{characteristic elements} of $-\mathbb Z^{n+1}$ by $\Char(-\mathbb Z^{n+1})$, where $$\Char(-\mathbb Z^{n+1}) = \{\mathfrak c \in -\mathbb Z^{n+1} | \langle \mathfrak c, v\rangle \equiv \langle v, v \rangle \ (\mod 2)\} = \Big\{\sum_{i=0}^{n}\alpha_ie_i|\alpha_i \equiv 1 \ (\mod 2) \text{ for all } i\Big\}.$$
Denote by $\Char_i(-\mathbb Z^{n+1})$ the set of characteristic elements of self-pairing $-(n+1)-8i$, where elements of $\Char_0(-\mathbb Z^{n+1}) = \{\pm 1\}^{n+1}$ are commonly referred to as \textit{short} characteristic elements. Then, $\sigma$ is a changemaker if and only if $\{\langle \mathfrak c, \sigma\rangle | \mathfrak c \in \Char_0(-\mathbb Z^{n+1})\}$ contains all integers $j$ with $j \equiv \langle \sigma, \sigma\rangle \ (\mod 2)$ and $|j| \leq |\sigma|_1$ \cite[Proposition 3.1]{greene2015space}.

\subsection{Input from Heegaard Floer homology.} The utility of changemaker lattices relies on the relationship between the Alexander polynomial of a lens space knot $K\subset S^3$, $HF^+(K(0))$, and the \emph{correction terms} of $K(p)$ \cite[Theorem 6.1]{MR2956253}. For $i$ a non-negative integer, define the $i^{th}$ \emph{torsion coefficient} of a knot $K\subset S^3$ with Alexander polynomial $\Delta_K(T)=\sum a_jT^j$ by  $$t_i(K) = \sum_{j=1}^\infty ja_{i+j}.$$
The following proposition contains the facts about torsion coefficients we will need in our proof.

\begin{prop}
Let $K\subset S^3$ be a lens space knot (or more generally an L-space knot).
\begin{itemize}
    \item $\{t_i(K)\}$ is a non-increasing sequence of non-negative integers that determines $\Delta_K(T)$.
    \item $t_i(K)=0$ if and only if $g(K)\leq i$.
    \item $t_i(K)=1$ if and only if $n_3\leq i \leq g(K)-1$.
\end{itemize}
\end{prop}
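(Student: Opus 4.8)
The plan is to reduce all three statements to elementary combinatorics of the coefficient sequence $(a_j)$ of $\Delta_K(T)$, using as the only nontrivial inputs the alternating $\pm 1$ structure of Corollary 1.3 and the identity $n_2=g-1$. Writing $b_i=\sum_{k\ge i}a_k$ for $i\ge 1$, a telescoping computation gives $t_{i-1}(K)-t_i(K)=b_i$, and hence, since $\Delta_K$ has degree $g$ so that $t_i=0$ for $i\ge g$, the closed form $t_i(K)=\sum_{k>i}b_k$. Differencing once more yields $t_{i-1}-2t_i+t_{i+1}=a_i$ for $i\ge 1$; together with the symmetry of $\Delta_K$ and the normalization $\Delta_K(1)=1$, this recovers every $a_j$ from $(t_i)$, which is the determination claim in the first bullet.

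Next I would exploit the structure of Corollary 1.3: the nonzero coefficients of $\Delta_K$ sit at the exponents $g=n_1>n_2>\cdots>n_r>0$ with alternating signs $a_{n_l}=(-1)^{l-1}$. Consequently every partial sum $b_i$ is a truncation of the alternating series $+1-1+1-\cdots$, so $b_i\in\{0,1\}$ for all $i\ge 1$; explicitly $b_i=1$ precisely when $i$ lies in one of the intervals $(n_{2m},\,n_{2m-1}]$ and $b_i=0$ otherwise. Since $t_i=\sum_{k>i}b_k$ is then a sum of $0$'s and $1$'s, it is a non-negative integer, and $t_{i-1}-t_i=b_i\ge 0$ shows the sequence is non-increasing, completing the first bullet. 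The largest index with $b_k=1$ is $k=g$, and $b_g=1$, so $t_i=\sum_{k>i}b_k$ vanishes exactly when no $k>i$ carries a $1$, i.e.\ exactly when $i\ge g$; this is the second bullet.

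For the third bullet I would use $n_2=g-1$ to pin down the top block: the interval $(n_2,n_1]$ on which $b\equiv 1$ consists of the single integer $g$, and $b\equiv 0$ on $(n_3,n_2]$. Thus for $n_3\le i\le g-1$ the sum $t_i=\sum_{k>i}b_k$ picks up only the term $b_g=1$, giving $t_i=1$; while for $i<n_3$ both $b_{n_3}=1$ and $b_g=1$ are counted, forcing $t_i\ge 2$, and for $i\ge g$ we have $t_i=0$. Hence $t_i=1$ if and only if $n_3\le i\le g-1$. The only real subtlety lies here: the statement would fail without $n_2=g-1$, since if $n_2\le g-2$ the top $+1$ block would contain $g-1$ as well and force $t_{g-2}\ge 2$. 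I therefore expect the main point to be not any single estimate but the correct bookkeeping of where the partial sums $b_i$ switch between $0$ and $1$, and the recognition that the two cited facts—the alternating staircase and $n_2=g-1$—are exactly what make the $0/1$ pattern line up with the stated ranges.
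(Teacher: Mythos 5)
Your proof is correct and is essentially the paper's argument: the paper's proof likewise rests on the single observation that $t_i(K)-t_{i+1}(K)=\sum_{\{j\mid n_j\geq i+1\}}(-1)^{j-1}\in\{0,1\}$, which is exactly your statement that the partial sums $b_i$ form a $0/1$ sequence, combined with $n_1=g$ and $n_2=g-1$. Your writeup simply makes explicit the bookkeeping (the closed form $t_i=\sum_{k>i}b_k$, the intervals $(n_{2m},n_{2m-1}]$, and recovery of the $a_j$ via second differences) that the paper leaves to the reader.
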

\begin{proof}
Recall that
\[
    \Delta_K(T) = (-1)^r +\sum_{j=1}^{r}(-1)^{j-1}(T^{n_j}+T^{-n_j}), \ n_1 = g(K), \ n_2 = g(K)-1.
\]
All of these facts then follow from the observation that $$t_i(K)-t_{i+1}(K) = \sum_{\{j|n_j\geq i +1\}}(-1)^{j-1},$$ which is either $0$ or $1$.
\end{proof}

We now state, without proof, specializations of \cite[Lemma 2.5 and Theorem 3.3]{greene2015space} in the case $K(p) \cong L(p,q)$.

\begin{thm}
If $K(p)\cong L(p,q)$, then $\Lambda(p,q)$ embeds as the orthogonal complement to a changemaker $\sigma\in -\mathbb{Z}^{n+1}$ with $\langle \sigma, \sigma\rangle =-p$ and $\sigma_0=1$. Moreover, 
\begin{equation}
    \mathfrak c^2 + (n+1)\leq -8t_i(K)
\end{equation}
for all $|i|\leq p/2$ and $\mathfrak c\in \Char(-\mathbb Z^{n+1})$ such that $\langle \mathfrak c, \sigma\rangle + p \equiv 2i \ (\mod 2p).$ Furthermore, for each $|i|\leq p/2$ there exists $\mathfrak c$ attaining equality in (2).
\end{thm}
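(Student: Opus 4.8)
The statement is asserted to be a specialization of \cite[Lemma 2.5 and Theorem 3.3]{greene2015space}, so the plan is to reconstruct that argument in the present setting. It rests on two inputs: Donaldson's diagonalization theorem, which produces the lattice embedding, and a comparison of the correction terms of $L(p,q)$ computed from its two natural negative-definite fillings, which produces the inequality (2) together with its sharpness. The combinatorial changemaker condition is then extracted from that sharpness by way of the coordinate-free criterion recalled in \S2.

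\emph{The embedding.} Orient so that $p>0$. The rational homology sphere $L(p,q)=K(p)$ bounds the linear plumbing $X(p,q)$, with $H_2(X(p,q))=\Lambda(p,q)$, and it also bounds the reversed trace $\overline{W_p(K)}$, where $W_p(K)$ is the $2$-handle cobordism attaching a single handle to $K$ with framing $p$; since $W_p(K)$ carries the form $(+p)$, the reversed piece is negative definite with $H_2(\overline{W_p(K)})=\langle\sigma\rangle$, $\langle\sigma,\sigma\rangle=-p$. Gluing $X(p,q)$ to $\overline{W_p(K)}$ along $L(p,q)$ yields a closed, smooth, negative-definite four-manifold $Z$ with $b_2(Z)=n+1$, so by Donaldson's theorem $H_2(Z)\cong-\bZ^{n+1}$. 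A Mayer--Vietoris computation identifies $\Lambda(p,q)$ with the image of $H_2(X(p,q))$ and shows that classes from the two fillings pair trivially, whence $\Lambda(p,q)\subseteq\sigma^\perp$; a rank count together with the saturation of $\Lambda(p,q)$ in $-\bZ^{n+1}$ (the content of \cite[Lemma 2.5]{greene2015space}) upgrades this to $\Lambda(p,q)=\sigma^\perp$. The normalization $\sigma_0=1$ then comes for free: if $\sigma_0=0$ then $\langle e_0,\sigma\rangle=-\sigma_0=0$, forcing $e_0\in\sigma^\perp=\Lambda(p,q)$ and producing a vector of square $-1$ in a linear lattice, which is impossible since every $x_i\geq 2$.

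\emph{The correction term comparison.} Because each $x_i\geq 2$, the plumbing $X(p,q)$ is sharp in the sense of Ozsv\'ath--Szab\'o, so for every $\spc$ structure $\mathfrak s$ on $L(p,q)$ one has $d(L(p,q),\mathfrak s)=\max_{\mathfrak c}\tfrac14(\mathfrak c^2+(n+1))$, the maximum taken over characteristic $\mathfrak c\in-\bZ^{n+1}$ restricting to $\mathfrak s$; under the Mayer--Vietoris identification, $\mathfrak c$ restricts to the $\spc$ structure indexed by $\langle\mathfrak c,\sigma\rangle\pmod{2p}$. On the other hand, the surgery formula \cite[Theorem 6.1]{MR2956253} computes the same correction terms from the knot side, expressing $d(K(p),i)$ as the unknot model $d(U(p),i)$ corrected by $-2t_{|i|}(K)$ (using $V_{|i|}=t_{|i|}$ for L-space knots). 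Matching the two computations class by class, after checking that the model term $d(U(p),i)$ is itself realized by short characteristic vectors and so cancels under the indexing $\langle\mathfrak c,\sigma\rangle+p\equiv 2i\pmod{2p}$, yields $\tfrac14(\mathfrak c^2+(n+1))\leq -2t_i(K)$ for all such $\mathfrak c$, which is inequality (2); sharpness of the plumbing provides, for each $|i|\leq p/2$, a maximizing $\mathfrak c$ attaining equality. I expect this comparison to be the main obstacle: one must fix orientation and $\spc$-labeling conventions so that the two correction-term formulas are indexed compatibly through the Mayer--Vietoris isomorphism, and in particular verify that the unknot model term is exactly absorbed by short vectors so that the right-hand side of (2) reduces cleanly to $-8t_i(K)$.

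\emph{The changemaker condition.} By Proposition 2, $t_i(K)=0$ precisely for $i\geq g(K)$, so throughout this range equality in (2) supplies a short characteristic vector $\mathfrak c\in\Char_0(-\bZ^{n+1})$ in each corresponding $\spc$ class. As $i$ runs over $g(K)\leq i\leq p/2$ together with its reflections, the pairings $\langle\mathfrak c,\sigma\rangle$ of these short vectors sweep out every integer of parity $\equiv\langle\sigma,\sigma\rangle\pmod 2$ with $|\langle\mathfrak c,\sigma\rangle|\leq|\sigma|_1$. By the coordinate-free criterion quoted in \S2, this is exactly the assertion that $\sigma$ is a changemaker, completing the proof.
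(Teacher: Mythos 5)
The paper states Theorem 3 without proof, explicitly as a specialization of \cite[Lemma 2.5 and Theorem 3.3]{greene2015space}, and your sketch faithfully reconstructs precisely that argument: Donaldson's theorem applied to the closed negative definite manifold $X(p,q)\cup_{L(p,q)}\overline{W_p(K)}$ for the embedding and the normalization $\sigma_0=1$, sharpness of the linear plumbing played against the surgery formula $d(K(p),i)=d(U(p),i)-2t_{|i|}(K)$ for inequality (2) and its realization, and the short-vector criterion of \cite[Proposition 3.1]{greene2015space} (quoted in \S 2) to extract the changemaker condition from equality in the range $t_i(K)=0$. This is correct and is essentially the same route as the cited source, the only quibble being that the model term $d(U(p),i)$ is absorbed by the extremal value of the rank-one component $-\langle\mathfrak c,\sigma\rangle^2/p$ of $\mathfrak c^2$ rather than by ``short'' vectors in the sense of $\Char_0(-\bZ^{n+1})$, exactly the convention-matching point you flag yourself.
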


\section{An alternative proof of Theorem 1} 
Our proof of Theorem 1 follows from Lemmas 4 and 5. 

\begin{lem}
If $K$ is non-trivial, $K(p)\cong L(p,q)$, and $n_3=g(K)-2$, then $\sigma_n=2$.
\end{lem}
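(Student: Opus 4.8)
The plan is to read off the index $n_3$ directly from the changemaker $\sigma$ by using Theorem 3 to express the torsion coefficients through short and near-short characteristic vectors, and then to match the resulting description of $\{i : t_i(K)=1\}$ against the one supplied by Proposition 2. Writing a characteristic vector as $\mathfrak c = \sum_{j=0}^{n}\alpha_j e_j$ with each $\alpha_j$ odd, we have $\mathfrak c^2 = -\sum_j \alpha_j^2$ and $\langle \mathfrak c, \sigma\rangle = -\sum_j \alpha_j \sigma_j$, so Theorem 3 identifies $8t_i(K)$ with the minimum of the ``excess'' $\sum_j \alpha_j^2 - (n+1)$ as $\mathfrak c$ ranges over characteristic vectors satisfying $\langle \mathfrak c, \sigma\rangle + p \equiv 2i \pmod{2p}$. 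Since each $\alpha_j$ is odd, $\alpha_j^2 - 1$ is a non-negative multiple of $8$; hence the excess is itself a multiple of $8$, it vanishes exactly on the sign vectors $(\pm1,\dots,\pm1)$, and the value $8$ is attained exactly by the vectors with a single coordinate equal to $\pm3$ and all others $\pm1$.

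First I would analyze the two relevant regimes. The short vectors realize $\langle \mathfrak c, \sigma\rangle = -\sum_j \pm\sigma_j$, sweeping out every integer of the correct parity in $[-|\sigma|_1,|\sigma|_1]$; comparing the classes they cover with the equivalence $t_i(K)=0 \iff i\ge g$ from Proposition 2 pins down $|\sigma|_1 = p-2g$, equivalently $g=(p-|\sigma|_1)/2$. A single bump $\alpha_k=\pm3$ displaces the attainable pairings by $\pm2\sigma_k$, and a short computation shows that the single-bump vectors at coordinate $k$ realize the classes of all $i$ with $g-\sigma_k \le i \le g+|\sigma|_1-2\sigma_k$. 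The deepest reach toward small $i$ therefore comes from the largest coordinate $\sigma_n$, namely $i=g-\sigma_n$; moreover the changemaker inequality $\sigma_n \le \sum_{j<n}\sigma_j+1$, i.e. $2\sigma_n-1\le|\sigma|_1$, guarantees that this single bump at $\sigma_n$ already covers the whole range $g-\sigma_n \le i \le g-1$.

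Putting these together, I would conclude that $t_i(K)=1$ exactly when $g-\sigma_n \le i \le g-1$: for such $i$ no short vector lies in the class (as $i<g$) but the bump at $\sigma_n$ does, so the excess is exactly $8$; whereas for $i<g-\sigma_n$ no single-bump vector at any coordinate reaches the class (since a bump at $k$ reaches only down to $g-\sigma_k \ge g-\sigma_n$), forcing excess at least $16$ and hence $t_i(K)\ge2$. Comparing with Proposition 2, which gives $t_i(K)=1 \iff n_3\le i\le g-1$, yields $n_3=g-\sigma_n$, so the hypothesis $n_3=g-2$ forces $\sigma_n=2$. I expect the main obstacle to be the bookkeeping in the second step: one must verify that single-$\pm3$ vectors are the only minimizers of the excess among non-short characteristic vectors, determine their reach in each residue class exactly, and rule out spurious representatives arising from the periodicity modulo $2p$ (a short check using $p>|\sigma|_1$). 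Once this dictionary between the size of the bump and the index $i$ is in place, the identity $n_3=g-\sigma_n$ and the conclusion are immediate.
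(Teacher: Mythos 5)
Your argument is correct in its essentials and rests on exactly the same two pillars as the paper's proof---Theorem 3's identification of $t_i(K)$ with the minimal $k$ such that some $\mathfrak c \in \Char_k(-\mathbb Z^{n+1})$ lies in the class of $i$, the identity $g=(p-|\sigma|_1)/2$, and Proposition 2---but it is organized quite differently. The paper runs a one-witness contradiction: assuming $\sigma_n\geq 3$, it takes $t$ to be the \emph{minimal} index with $\sigma_t\geq 3$, uses the changemaker property to find $A\subseteq\{0,\ldots,t-1\}$ with $\sum_{k\in A}\sigma_k=\sigma_t-3$, and checks that the single vector $\mathfrak c=\sum_{j\not\in A}e_j-\sum_{k\in A}e_k+2e_t\in\Char_1(-\mathbb Z^{n+1})$ lies in the class of $i=g-3$, so that $t_{g-3}(K)=1$, contradicting $n_3=g-2$; the case $\sigma_n=1$ is then excluded by non-triviality of $K$. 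You instead characterize the whole set $\{i : t_i(K)=1\}$ and deduce the general identity $n_3=g-\sigma_n$, of which Lemma 4 is the special case $n_3 = g-2$. Your route costs more bookkeeping (coverage, non-coverage, and the mod-$2p$ wraparound checks, which do go through as you indicate) but buys a genuinely sharper statement valid for every non-trivial lens space knot.

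One intermediate claim, however, is overstated and must be weakened to what you actually use. It is \emph{not} true that for every coordinate $k$ the single-bump vectors at $k$ realize the classes of all $i$ with $g-\sigma_k\leq i\leq g+|\sigma|_1-2\sigma_k$. Take $\sigma=(1,1,2,4)$ (a changemaker, with $p=22$, $|\sigma|_1=8$, $g=7$) and $k=2$: the sums $\sum_{j\neq 2}\epsilon_j\sigma_j$ with $\epsilon_j=\pm1$ omit the value $0$, and one checks that no bump at $k=2$ lies in the class of $i=8$, even though $8$ lies in the claimed range $[5,11]$. The failure occurs because deleting an interior coordinate from a changemaker need not leave a changemaker, so the displaced pairings need not fill out an interval of the right parity. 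Fortunately your deductions invoke only the two true instances of the claim: the universal lower bound (a bump at $k$ reaches no class below $g-\sigma_k$), which needs no fullness, and full coverage of $[g-\sigma_n,g-1]$ by bumps at the \emph{last} coordinate, which holds because the truncation $(\sigma_0,\ldots,\sigma_{n-1})$ is itself a changemaker---so its subset sums realize every integer in $[0,\sigma_n-1]$, granted the changemaker inequality $\sigma_n-1\leq\sum_{j<n}\sigma_j$ that you cite. With the claim restated in this weaker form, your proof is complete.
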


\begin{lem}
If $\sigma$ is a changemaker with $\sigma_n = 2$ and $(\sigma) ^\perp$ is a linear lattice, then $\sigma = (1, 2, \ldots, 2)$ or $\sigma=(1,1,1,2, \ldots, 2)$.
\end{lem}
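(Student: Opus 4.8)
The plan is to first use the changemaker axioms to force $\sigma$ into the shape $(1^{[a]},2^{[b]})$, then read off the continued fraction of $\Lambda(p,q)$ from the root sublattice of $(\sigma)^\perp$, and finally use a single determinant computation to cut down to the two listed possibilities. The point is that once $\sigma$ is known to consist only of $1$'s and $2$'s, the lattice $(\sigma)^\perp$ is rigid enough that two isomorphism invariants---its root sublattice and its determinant---already pin it down.

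First I would record that the inequalities $\sigma_{i-1}\leq\sigma_i$ make $\sigma$ nondecreasing, so $\sigma_n=2$ forces every entry into $\{0,1,2\}$. If $\sigma_0=0$, then $e_0\in(\sigma)^\perp$ is a vector of norm $-1$; but $-\Lambda(p,q)$ is a positive definite form whose Gram matrix has all diagonal entries $\geq 2$, and a short completing-the-square estimate shows such a form never represents $1$, so a linear lattice contains no vector of norm $-1$. Hence $\sigma_0=1$, and therefore $\sigma=(1^{[a]},2^{[b]})$ with $a,b\geq 1$; the changemaker bound at the first $2$ reads $2\leq a+1$, which holds automatically. All that remains is to show $a\in\{1,3\}$.

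Next I would compute the sublattice of $(\sigma)^\perp$ generated by its norm $-2$ vectors. A vector of $-\bZ^{n+1}$ has norm $-2$ exactly when it equals $\pm(e_i-e_j)$, and such a vector is orthogonal to $\sigma$ iff $\sigma_i=\sigma_j$. Since the entries equal to $1$ occupy indices $0,\dots,a-1$ and those equal to $2$ occupy $a,\dots,a+b-1$, the root sublattice is exactly $A_{a-1}\oplus A_{b-1}$. On the other hand, the norm $-2$ vectors of a linear lattice $[x_1,\dots,x_n]^-$ are supported on the maximal runs of consecutive $2$'s, and each such run of length $k$ contributes an orthogonal summand $A_k$. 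By uniqueness of the decomposition of an $A$-type root lattice into irreducibles, matching the two descriptions forces the continued fraction to contain exactly one entry $c\geq 3$ separating runs of $2$'s of lengths $a-1$ and $b-1$; that is, $\Lambda(p,q)=[2^{[a-1]},c,2^{[b-1]}]^-$ up to reversal.

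Finally I would compare determinants. As $\sigma$ is primitive in the unimodular lattice $-\bZ^{n+1}$, we have $\det(\sigma)^\perp=|\langle\sigma,\sigma\rangle|=a+4b$, while the standard product of $2\times 2$ matrices evaluating the numerator of $[2^{[a-1]},c,2^{[b-1]}]^-$ gives $abc-2ab+a+b$ (a quantity symmetric in $a$ and $b$, so reversal is harmless). Equating the two yields $a(c-2)=3$, whose only solutions with $a\geq 1$ and $c\geq 3$ are $(a,c)=(1,5)$ and $(a,c)=(3,3)$. Thus $a=1$ or $a=3$, giving $\sigma=(1,2,\dots,2)$ or $\sigma=(1,1,1,2,\dots,2)$. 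I expect the main obstacle to be the middle step---establishing cleanly that the root sublattice of a linear lattice splits as an orthogonal sum of $A_k$'s indexed by the runs of $2$'s, so that matching it against $A_{a-1}\oplus A_{b-1}$ rigidly determines the continued fraction up to reversal. The opening reduction and the closing determinant bookkeeping are routine once this structural fact is available.
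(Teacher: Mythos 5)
Your proof is correct, and it takes a genuinely different route from the paper's. The paper stays inside $-{\mathbb Z}^{n+1}$ and uses changemaker technology: it writes down the standard basis of $(\sigma)^\perp$, forms its intersection graph, rules out four or more initial $1$'s because the basis would then contain a claw, which is impossible for a linear lattice by \cite[Lemma 4.8]{greene2013lens}, rules out exactly two initial $1$'s because $(\sigma)^\perp$ would then be decomposable, and verifies directly that the two remaining shapes are linear. You instead forget the embedding and pin down the abstract isomorphism type of $(\sigma)^\perp$ by two invariants---its root sublattice $A_{a-1}\oplus A_{b-1}$ and its discriminant $a+4b$---which force the continued fraction $[2^{[a-1]},c,2^{[b-1]}]^-$ (up to reversal) and then the single equation $a(c-2)=3$, disposing of $a=2$ and $a\geq 4$ simultaneously. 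This buys self-containedness (no standard basis, no claw obstruction, no appeal to indecomposability of linear lattices), and as a bonus it recovers $c=5$ or $c=3$, i.e.\ exactly the two continued fractions $[5,2^{[g-1]}]^-$ and $[2,2,3,2^{[g-1]}]^-$ that the paper only exhibits later, in the proof of Theorem 1. The one real debt in your plan is the structural fact you flag about roots of linear lattices, and it does hold by the same completing-the-square device you invoke elsewhere: setting $v_0=v_{n+1}=0$, the identity
\[
-\langle v,v\rangle=\sum_{i=1}^{n}(x_i-2)v_i^2+\sum_{i=0}^{n}(v_i-v_{i+1})^2
\]
shows that a norm $-2$ vector of $[x_1,\ldots,x_n]^-$ must have exactly two unit jumps and support on entries equal to $2$, so the roots are precisely the vectors $\pm(f_s+\cdots+f_t)$ with $x_s=\cdots=x_t=2$; these span one copy of $A_k$ for each maximal run of $2$'s of length $k$, and roots from distinct runs are orthogonal since distinct runs are separated by at least one index.

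Two small repairs. First, the norm $-2$ vectors of $-{\mathbb Z}^{n+1}$ are all vectors $\pm e_i\pm e_j$, not only $\pm(e_i-e_j)$; you need the additional remark that $\pm(e_i+e_j)$ pairs nontrivially with $\sigma$ because every $\sigma_i>0$, after which your computation of the root sublattice of $(\sigma)^\perp$ stands. Second, to your credit, you treat the case $\sigma_0=0$ explicitly (it is vacuous because a linear lattice represents no $-1$, by the same identity as above); the paper's proof passes over this degenerate case in silence.
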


\begin{proof}[Proof of Theorem 1]

Observe that

\[A_{(1,2,\ldots,2)^\perp}=\begin{bmatrix}
-5 & 1 & 0 & \ & 0 & 0\\
1 & -2 & 1& \ldots & 0&0\\
0 & 1 & -2 & \ & 0&0\\
\ & \vdots & \ & \ddots & \vdots& \ \\
0 & 0 & 0 & \ldots &-2 & 1\\
0 & 0 & 0 & \ & 1 & -2
\end{bmatrix}, \ 
A_{(1,1,1,2,\ldots,2)^\perp}=\begin{bmatrix}
-2 & 1 & 0 & 0 & \ & 0\\
1 & -2 & 1& 0 & \ldots &0\\
0 & 1 & -3 & 1 & \ &0\\
 0& 0 & 1 & -2 & \ &\vdots \\
\ & \vdots & \ & \ &\ddots & 1\\
0 & 0 & 0 & \ldots & 1 & -2
\end{bmatrix}.
\]
Let $g=g(K)$. The torus knot $T_{2, 2g+1}$ admits two integral Dehn surgeries to lens spaces: $T_{2,2g+1}(4g+1) \cong L(4g+1, g)$ and $T_{2,2g+1}(4g+3)\cong L(4g+3,3g+2)$. Furthermore, $(4g+1)/g = [5, 2^{[g-1]}]^-$ and $(4g+3)/(3g+2)=[2,2,3,2^{[g-1]}]^-$. By \cite[Theorem 3]{gerstein1995nearly}, if $\Lambda(p,q)\cong\Lambda(p',q')$ then $p =p'$ and $q=q'$ or $qq'\equiv 1 \ (\mod p)$ (cf. \cite[Proposition 3.6]{greene2015space}). Since $\sigma$ then determines both $\{t_i(K)\}$ and the oriented homeomorphism type of $K(p)$, we conclude that $\Delta_K(T)=\Delta_{T_{2,2g+1}}(T)$, $p= 4g+1$ or $p=4g+3$, and $K(p)\cong T_{2,2g+1}(p)$.
\end{proof}

\begin{proof}[Proof of Lemma 4] For $0 \leq i \leq p/2$, Theorem 3 allows us to compute $$t_i(K)=\min\{k|\langle \mathfrak c , \sigma \rangle + p \equiv 2i\ (\mod 2p) \text{ for some } \mathfrak c \in \Char_k(-\mathbb Z^{n+1})\}.$$ 

Suppose that $\sigma_n\geq 3$. Let $t\in \{ 0, 1, \ldots, n\}$ be the minimum index for which $\sigma_i\geq 3$. Since $\sigma$ is a changemaker, there exists $A\subset \{0, 1, \ldots, n\}$ such that $\sum_{k \in A}\sigma_k = \sigma_t-3$. We point out that $t\not \in A$. Let $\mathfrak c = \sum_{j\not\in A}e_j - \sum_{k\in A}e_k + 2e_t$. Then

$$p + \langle \mathfrak c,\sigma\rangle = p -|\sigma|_1+2\sum_{k\in A}\sigma_k -2 \sigma_t = p -|\sigma|_1 - 6 = 2g-6,$$
so $t_{g-3}(K)= 1$. Since $n_3 = g-2$, $t_i(K) = 1$ if and only if $g-2\leq i \leq g-1$, and therefore $\sigma_n\leq 2$. Now, if $\sigma_n = 1$, then $p=|\langle \sigma, \sigma\rangle| = n+1$ and $g=\frac{p -|\sigma|_1}{2} = 0$, so $K=U$. We conclude that $\sigma_n = 2$. 
\end{proof}

\begin{proof}[Proof of Lemma 5]
Since $\sigma_n= 2$, there exists some index $1\leq k\leq n$ for which $\sigma_i=1$ if $i\leq k$ and $\sigma_j =2$ if $k < j$. If $k\geq 2$, then $(\sigma)^\perp$ admits the \emph{standard basis} $S= (v_1, \ldots, v_n)$, where $v_i = e_{i-1}-e_i$ for $i\neq k$, and $v_k = e_{k-2} + e_{k-1} -e_k$. Consider the \emph{intersection graph} on $S$, $G(S)=(V, E)$ where $V= S$, and $(v_i,v_j)\in E$ if $|\langle v_i, v_j\rangle|= 1$. (This is not the usual definition of the edge set of an intersection graph, but is equivalent given that $|\langle v_i,v_i\rangle| =2$ for all but one element of $S$.) If $k \geq 4$, then the subgraph induced by $\{v_{k-3}, v_{k-2},v_{k-1},v_k\}$ is a \emph{claw}, which cannot occur as an induced subgraph of $G(S)$ if $(\sigma)^\perp$ is a linear lattice \cite[Lemma 4.8]{greene2013lens}. If $k = 2$, then $(\sigma)^\perp$ is decomposable. It is straightforward to verify that $(\sigma)^\perp$ is a linear lattice in either of the cases $k = 1$ or $k=3$, as we have done in the proof of Theorem 1.
\end{proof}

\bibliographystyle{plain}
\bibliography{ChangemakersAndAlexanderPolynomials}

\end{document}